\documentclass[a4paper]{article}

\usepackage[english]{babel}

\usepackage[utf8]{inputenc}
\setlength{\parindent}{2em}
\setlength{\parskip}{0.5em}

\usepackage[utf8]{inputenc}
\usepackage{amsmath}
\usepackage{graphicx}
\usepackage{amssymb}
\usepackage{amsthm}
\usepackage{tikz-cd}
\usepackage{mathrsfs}
\usepackage[colorinlistoftodos]{todonotes}
\usepackage{enumitem}
\usepackage{yfonts}
\usepackage{ dsfont }
\usepackage{MnSymbol}
\usepackage{slashed}

\title{Uniform Skoda integrability and  Calabi-Yau degeneration}

\author{Yang Li}

\date{\today}
\newtheorem{thm}{Theorem}[section]
\newtheorem{lem}[thm]{Lemma}

\theoremstyle{definition}

\newtheorem{cor}[thm]{Corollary}

\newtheorem{rmk}[thm]{Remark}
\newtheorem{prop}[thm]{Proposition}

\newtheorem*{Acknowledgement}{Acknowledgement}

\newcommand{\cf}{\emph{cf.} }

\newcommand{\C}{\mathbb{C}}

\newcommand{\norm}[1]{\left\lVert#1\right\rVert}

\def\Xint#1{\mathchoice
	{\XXint\displaystyle\textstyle{#1}}%
	{\XXint\textstyle\scriptstyle{#1}}%
	{\XXint\scriptstyle\scriptscriptstyle{#1}}%
	{\XXint\scriptscriptstyle\scriptscriptstyle{#1}}%
	\!\int}
\def\XXint#1#2#3{{\setbox0=\hbox{$#1{#2#3}{\int}$ }
		\vcenter{\hbox{$#2#3$ }}\kern-.6\wd0}}

\def\dashint{\Xint-}

\begin{document}
	\maketitle

\begin{abstract}
We study polarised algebraic degenerations of Calabi-Yau manifolds. We prove a uniform Skoda type estimate, and a uniform $L^\infty$-estimate for the Calabi-Yau K\"ahler potentials.
\end{abstract}

\section{Introduction}

Let $(Y,\omega)$ be a compact K\"ahler manifold, and $d\mu$ be a measure on $Y$. We say $(Y,\omega, d\mu)$ satisfies the \emph{Skoda type inequality}, if for any K\"ahler potential $u \in PSH(Y,\omega)$ normalised to $\sup u=0$, 
\begin{equation}\label{Skodaassumption}
\int_Y e^{-\alpha u} d\mu \leq A, 
\end{equation}
where $\alpha, A$ are independent of  $u$. A prototype theorem is

\begin{thm}\label{Skodastandardversion}\cite{Tian}
	On a fixed compact K\"ahler $(Y, \omega)$, the Skoda type inequality holds for $d\mu=\omega^n$.
\end{thm}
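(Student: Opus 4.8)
The plan is to reduce the exponential integral to a uniform estimate on sublevel sets and then establish the latter by combining an $L^1$-compactness property of normalised $\omega$-psh functions with the classical local integrability theorem of Skoda and H\"ormander. Since $\sup_Y u=0$ forces $-u\geq 0$, the layer-cake formula gives
\[
\int_Y e^{-\alpha u}\,\omega^n=\omega^n(Y)+\alpha\int_0^\infty e^{\alpha t}\,\omega^n\big(\{u<-t\}\big)\,dt,
\]
so it suffices to produce constants $\beta>0$ and $C>0$, independent of $u$, with
\[
\omega^n\big(\{u<-t\}\big)\leq C e^{-\beta t}\qquad\text{for all }t>0;
\]
any $\alpha<\beta$ then yields \eqref{Skodaassumption} with a uniform $A$.

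First I would record the uniform $L^1$ bound. The family $\mathcal F=\{u\in PSH(Y,\omega):\sup_Y u=0\}$ is compact in $L^1(Y,\omega^n)$: covering $Y$ by finitely many coordinate balls on which $\omega=i\ddbar\phi_i$ with $\phi_i$ smooth, each $u+\phi_i$ is genuinely plurisubharmonic, and the standard Hartogs-type lemma together with the sub-mean-value inequality shows that a sequence in $\mathcal F$ either converges in $L^1$ to an element of $PSH(Y,\omega)$ or tends uniformly to $-\infty$, the latter being excluded by $\sup_Y u=0$. In particular $C_0:=\sup_{\mathcal F}\int_Y(-u)\,\omega^n<\infty$.

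Next I would localise. On each ball $B_i$ the function $v=u+\phi_i$ is psh, uniformly bounded above, and of uniformly bounded $L^1$ norm. The local integrability theorem of Skoda, proved via H\"ormander's $L^2$-estimate, guarantees that $e^{-\alpha v}$ is locally integrable once $\alpha$ is small compared to the Lelong numbers of $v$; the real content here is that $\alpha$ can be chosen \emph{uniformly} over $\mathcal F$. I would obtain this uniformity by a compactness-and-contradiction argument in the spirit of Tian: were there no uniform pair $(\beta,C)$, there would exist $\alpha_j\to 0$ and $u_j\in\mathcal F$ with $\int_Y e^{-\alpha_j u_j}\,\omega^n\to\infty$; extracting an $L^1$-convergent subsequence $u_j\to u_\infty\in PSH(Y,\omega)$ and passing to the limit (using lower semicontinuity and Fatou) would force $e^{-\alpha u_\infty}$ to be non-integrable for every $\alpha>0$, contradicting the finiteness of $\int_{B_i}e^{-\alpha v_\infty}$ for the single psh function $v_\infty=u_\infty+\phi_i$.

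The main obstacle is exactly this passage from the pointwise, single-function local integrability to a constant independent of $u$. The qualitative statement that $e^{-\alpha\varphi}\in L^1_{\mathrm{loc}}$ for a fixed psh $\varphi$ and small $\alpha$ is classical, but the uniform version requires control that does not degenerate along the family---either a uniform bound on Lelong numbers or, as above, the $L^1$-compactness of $\mathcal F$ to rule out escape of mass. Once the uniform sublevel estimate is in hand, summing the finitely many local contributions over the cover and inserting into the layer-cake identity completes the proof.
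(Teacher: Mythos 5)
Your overall skeleton --- a uniform $L^1$ bound from compactness of $\mathcal{F}=\{u\in PSH(Y,\omega):\sup_Y u=0\}$, plus a local integrability estimate on coordinate balls, summed over a finite cover --- is the right one, and it parallels the machinery this paper uses for its degenerate generalization (Lemma \ref{LocalL1estimate} combined with Proposition \ref{Skodabasicversion}). The layer-cake reformulation is correct but is only a repackaging: exponential decay of sublevel-set measures, uniformly in $u$, is equivalent to the Skoda inequality itself, so all of the content sits in the uniformity of the local estimate.

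That is exactly where your argument has a genuine gap. In the compactness-and-contradiction step you extract $u_j\to u_\infty$ in $L^1$ and claim that ``lower semicontinuity and Fatou'' force $e^{-\alpha u_\infty}$ to be non-integrable. Fatou's lemma goes the wrong way: along an a.e.\ convergent subsequence it gives $\liminf_j \int_Y e^{-\alpha u_j}\,\omega^n \geq \int_Y e^{-\alpha u_\infty}\,\omega^n$, i.e.\ \emph{lower} semicontinuity of the exponential integral, which in no way prevents $\int_Y e^{-\alpha_j u_j}\,\omega^n$ from blowing up while the limit $u_\infty$ has a perfectly integrable (even bounded) singularity; nothing in your tools excludes increasingly spiky $u_j$ whose $L^1$ traces vanish but whose exponential integrals diverge. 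What the contradiction requires is the opposite inequality --- an upper bound on $\int_Y e^{-\alpha_j u_j}\,\omega^n$ in terms of the limit function --- i.e.\ upper semicontinuity, or effective convergence, of exponential integrals along $L^1$-convergent sequences of psh functions. That statement is true, but it is the Demailly--Koll\'ar semicontinuity theorem (proved via Ohsawa--Takegoshi), not a consequence of Fatou; invoking it amounts to assuming precisely the uniformity you set out to prove, so as written the argument is circular. The standard repair, and the input the paper itself relies on, is a local estimate that is \emph{already uniform}: H\"ormander's lemma, or Zeriahi's theorem quoted as Proposition \ref{Skodabasicversion}, in which $\alpha$ and $C$ are dimensional constants depending only on the normalization $\int_{B_2}|\phi|\,\omega_E^n\leq 1$ and not on $\phi$ (its proof goes through Jensen's formula, giving uniform Riesz-mass and Lelong-number control, rather than soft compactness). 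With that single input, your covering argument and the uniform $L^1$ bound immediately yield \eqref{Skodaassumption}; without it, the uniformity never gets off the ground.
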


\begin{rmk}
Here the supremum of all such $\alpha$ 
 is known as Tian's alpha invariant, important for existence questions of K\"ahler-Einstein metrics. 	
\end{rmk}

We are interested  in keeping track of these constants $\alpha, A$ as  $(Y,\omega,d\mu)$ varies. The main theme of this paper is that oftentimes the Skoda constants can be chosen uniformly for quite flexible choices of probability measures $d\mu$, even when the complex structure degenerates severely. 	In the literature $\alpha$ is much studied (\cf \cite{Tian}\cite{GZ}), and a very recent preprint \cite{Eleonora} made aware to the author after the completion of this work contains a  uniform estimate for both $\alpha, A$ in the related context of K\"ahler-Einstein manifolds.


Our main application is to algebraic degenerations of Calabi-Yau manifolds. 
We work over $\C$. Let $S$ be a smooth affine algebraic curve, with a point $0\in S$.
An \emph{algebraic degeneration family} is given by a submersive projective morphism $\pi: X\to S\setminus \{0\}$ with smooth connected $n$-dimensional fibres $X_t$ for $t\in S\setminus \{0\}$. A \emph{polarisation} is given by an ample line bundle $L$ over $X$; the sections of a sufficiently high power of
$L$ induces an embedding $X\to \mathbb{CP}^N$, hence a Fubini-Study metric $\omega_X$ on $(X, L)$. For $0<|t|\ll 1$, 
a fixed choice of $\omega_X$ induces \emph{rescaled} background metrics $\omega_t= \frac{1}{ |\log |t| |  } \omega_X|_{ X_t }$ on $X_t$ in the class
$\frac{1}{|\log |t| |   }c_1(L)  $.

 A \emph{model} of $X$ is a normal flat projective $S$-scheme $\mathcal{X}$ which agrees with $\pi: X\to S\setminus \{0\}$ over the punctured curve. It is called a \emph{semistable snc model} if $\mathcal{X}$ is smooth, 
 the central fibre over $0\in S$ is \emph{reduced} and is a simple normal crossing divisor in $\mathcal{X}$. By the semistable reduction theorem \cite[chapter 2]{ToroidalembeddingsI}, after finite base change to another smooth algebraic curve $S'$, we can always find some semistable snc model for the degeneration family $X\times_S (S'\setminus \{0\})$. Everything here is quasi-projective.

 We say the degeneration family is \emph{Calabi-Yau} if there is a trivialising section $\Omega$ of the canonical bundle $K_X$. Over a small disc $\mathbb{D}_t$ around $0\in S$, this induces holomorphic volume forms $\Omega_t$ on $X_t$ via $\Omega= dt\wedge \Omega_t$. The \emph{normalised Calabi-Yau measure} on $X_t$ is the probability measure
 \begin{equation}\label{CalabiYaumeasureeqn}
 d\mu_t= \frac{   \Omega_t \wedge \overline{\Omega}_t }{   \int_{X_t} \Omega_t \wedge \overline{\Omega}_t   }.
 \end{equation}

Our main result is

\begin{thm}(Uniform Skoda estimate)\label{UniformSkodamaintheorem}
	Given a polarised algebraic Calabi-Yau degeneration family $\pi: X\to S\setminus \{0\}$ as above. Then there are uniform positive constants $\alpha, A$ independent of $t$ for $0<|t|\ll 1$, such that for the normalised Calabi-Yau measures $d\mu_t$,
	\[
	\int_{X_t} e^{-\alpha u}  d\mu_t \leq A, \quad \forall u\in PSH(X_t,\omega_t) \text{ with } \sup_{X_t} u=0.
	\]

\end{thm}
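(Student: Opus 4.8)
The plan is to reformulate the exponential integrability as a uniform decay estimate for sublevel sets, and to reduce the whole problem, via semistable reduction, to a finite collection of explicit local models in which the Calabi--Yau measure and the degenerating K\"ahler class can be compared directly. Writing the layer-cake formula
\[
\int_{X_t} e^{-\alpha u}\, d\mu_t = 1 + \alpha\int_0^\infty e^{\alpha s}\,\mu_t\big(\{u<-s\}\big)\,ds,
\]
it suffices to produce uniform constants $C>0$ and $\alpha'>\alpha>0$, independent of $t$, with $\mu_t(\{u<-s\})\le C e^{-\alpha' s}$ for all $\omega_t$-psh $u$ normalised by $\sup u=0$. First I would invoke the semistable reduction theorem quoted above to pass to a semistable snc model $\mathcal X$ after a finite base change $S'\to S$ ramified at $0$; since this alters $|\log|t||$ only by a bounded factor and pulls the polarisation and $\Omega$ back to comparable data, a uniform estimate upstairs descends. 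The essential scaling observation is that $\omega_t=\tfrac1{|\log|t||}\omega_X$, so $u\in PSH(X_t,\omega_t)$ is equivalent to $|\log|t||\,u\in PSH(X_t,\omega_X)$; the rescaling of the class by $1/|\log|t||$ is precisely what should compensate the logarithmic spreading of $d\mu_t$.

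Next I would set up adapted coordinates on $\mathcal X$ near each stratum of the central fibre. Near a depth-$k$ stratum the morphism reads $\pi=z_1\cdots z_k$, so $X_t=\{z_1\cdots z_k=t\}$, and in the logarithmic variables $\xi_i=\log|z_i|$ the normalised Calabi--Yau measure is comparable to a fixed smooth density times $\prod_i d\xi_i\,d\theta_i$ restricted to the simplex-like slice $\{\sum_i\xi_i=\log|t|\}$, whose extent in the $\xi$-directions is $\asymp|\log|t||$. Using $\Omega=dt\wedge\Omega_t$ and the reducedness of the central fibre, the mass and density asymptotics of $d\mu_t$ on this model are explicit. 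The key analytic input is that, because $\xi_i=\mathrm{Re}\log z_i$ is pluriharmonic, the condition $\omega_t+dd^c u\ge0$ forces $u$, averaged over the angular variables $\theta_i$, to be convex in the $\xi$-variables up to an error controlled by the coefficients of $\omega_t$ in these coordinates. The crucial point is that these coefficients are small: the restriction $\omega_X|_{X_t}$ pinches along the neck regions, yet its total mass on $X_t$ is topological, hence bounded; consequently the total budget of allowed non-convexity along each $\xi$-direction is $O(1/|\log|t||)$.

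The heart of the argument is then a purely real, finite-dimensional convexity estimate on the model. After rescaling $\xi\mapsto\xi/|\log|t||$ the slice becomes a fixed bounded polytope, $u$ becomes convex up to an error that is now $O(1)$ in total, and $d\mu_t$ becomes comparable to a fixed measure; the $\sup u=0$ normalisation then bounds the measure of each sublevel set $\{u<-s\}$ exponentially, since a convex (up to bounded error) function with prescribed maximum cannot be very negative on more than an exponentially small fraction of the polytope. Matching this exponent across the finitely many charts and patching by a partition of unity subordinate to an open cover of $\mathcal X$ produces the uniform $\alpha',C$, and hence the claimed $\alpha,A$.

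The main obstacle I anticipate is the uniformity of the convexity-budget estimate across all strata simultaneously and for genuinely pluri(sub)harmonic, rather than merely toric, potentials: one must control the angular averaging, the transition regions where charts overlap and where the stratification changes depth, and the precise comparison between $d\mu_t$ and the flat model density, all with constants independent of $t$ and of $u$. In particular, verifying that the degeneration of the coefficients of $\omega_X|_{X_t}$ exactly balances the $|\log|t||$-spreading of the Calabi--Yau measure, uniformly over the deepest strata where several $z_i$ degenerate at once, is where the real work lies; the classical Skoda inequality (Theorem~\ref{Skodastandardversion}) enters only to treat the transverse, non-degenerate directions within each chart.
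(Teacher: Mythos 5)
Your overall scaffolding---semistable reduction after base change, logarithmic coordinates adapted to the strata of the central fibre, angular averaging to produce convexity, and the observation that the $1/|\log|t||$ rescaling of the K\"ahler class balances the $|\log|t||$-spreading of the Calabi--Yau measure---matches the paper's strategy. But the step you place at the heart of the argument is false as stated: it is not true that a function on a fixed polytope $P$ which is convex up to $O(1)$ error and normalised by $\sup_P f=0$ has uniformly exponentially small sublevel sets. The function $f(\xi)=-M\xi_1$ is exactly convex and has $\sup_P f=0$, yet $|\{f<-s\}|\to|P|$ as $M\to\infty$ for each fixed $s$: convexity plus the max normalisation gives no control on slopes, hence no sublevel-set decay at all. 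Undoing your rescaling, such an $f$ corresponds to a potential behaving like $-\frac{M}{|\log|t||}\log|z_1|$, i.e.\ one that is $\approx 0$ near one component $E_0$ of the central fibre and $\approx -M$ near an adjacent component $E_1$. What rules this out is not convexity (the function above is pluriharmonic on the neck) but a \emph{global} mechanism: one must transfer the sup/$L^1$ bound, which the normalisation provides only near the maximum point, across the degenerate necks to every other stratum. In the paper this is exactly the role of the good test function (Lemma \ref{goodtestfunction}) and the integration-by-parts argument of Proposition \ref{AlmostmaximumII}: a test function $v$ of height $O(|\log|t||)$ is paired against $(\omega_t+\sqrt{-1}\partial\bar{\partial}u)\wedge\omega_{\mathcal{X}}|_{X_t}^{n-1}$, whose total mass is $O(1/|\log|t||)$ by Stokes and cohomology, and the product of these two scales is the $O(1)$ leverage that propagates the bound from $E_0^0$ to each adjacent $E_j^0$ and then, by connectedness, everywhere. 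Your proposal has no counterpart to this step; your ``convexity budget'' constrains only the negative part of the Hessian locally and cannot substitute for it, and a partition of unity only patches estimates one already has on every chart---it does not create the estimate on strata far from the maximum.

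A second, smaller discrepancy: even after all strata are anchored, the paper does not extract exponential integrability from convexity. Convexity (Lemma \ref{pshconvexity}), combined with an induction on strata depth (Lemma \ref{LocalL1estimate}), is used only to obtain a uniform $L^1$ bound for $u$ on each unit log scale; the exponential decay then comes from the classical local Skoda inequality (Proposition \ref{Skodabasicversion}) applied on every such unit cell---not merely ``in the transverse directions''---after which one sums against the class-$(a_i)$ measure bound (Theorem \ref{UniformSkodasemistable}). This pluripotential-theoretic input is what converts $L^1$ control of $u$ into control of $\int e^{-\alpha u}$, and it has no purely convex-geometric replacement. So the correct architecture is: anchor at the maximum, transfer across necks by the test-function/Stokes argument, propagate into deep strata by convexity, and finish with local Skoda on log scales; your proposal supplies the scaling heuristics and the germ of the third step, but is missing the second and fourth, and replaces them with a convexity principle that is untrue.
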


This is proved by reducing to the semistable snc model case, and prove a general Skoda type estimate there (\cf Theorem \ref{UniformSkodasemistable}). A major consequence, readily reaped using Kolodziej's estimate (\cf Theorem \ref{pluripotentialthm1}), is

\begin{thm}\label{UniformLinfty}
(Uniform $L^\infty$-estimate) Let $\phi_t$ be the K\"ahler potential of the Calabi-Yau metric in the class $(X_t, [\omega_t])$, namely
\[
\frac{(\omega_t+ \sqrt{-1} \partial\bar{\partial}\phi)^n }{ \int_{X_t} \omega_t^n  }= d\mu_t,\quad \sup_{X_t} \phi_t=0.
\]
Then $\norm{ \phi_t}_{L^\infty} \leq C$ independent of $t$ for $0<|t|\ll 1$.

\end{thm}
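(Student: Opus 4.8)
The plan is to regard the defining identity as a complex Monge--Amp\`ere equation with prescribed measure and to invoke the pluripotential $L^\infty$--estimate of Kolodziej (Theorem \ref{pluripotentialthm1}), whose sole analytic hypothesis is a Skoda/capacity--domination condition that is supplied verbatim by Theorem \ref{UniformSkodamaintheorem}. Setting $V_t:=\int_{X_t}\omega_t^n$ and clearing the denominator, the equation reads
\[
(\omega_t+\sqrt{-1}\ddbar\phi_t)^n=V_t\,d\mu_t,\qquad \sup_{X_t}\phi_t=0,
\]
and since $d\mu_t$ is a probability measure both sides have total mass $V_t$, so on each fixed smooth fibre $\phi_t$ exists by Yau's theorem ($d\mu_t$ being a smooth positive volume form). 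The whole point of using the capacity--based estimate is that its constant sees the geometry of $(X_t,\omega_t)$ \emph{only} through $n$, the volume $V_t$, and the Skoda constants controlling how $d\mu_t$ is dominated by the Monge--Amp\`ere capacity $\mathrm{cap}_{\omega_t}$; it never invokes the Sobolev constant, the injectivity radius, or $\norm{d\mu_t/\omega_t^n}_{L^p}$, each of which blows up as $t\to0$.

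Thus I would reduce the theorem to checking that exactly three inputs are uniform in $t$. The measure $d\mu_t$ is a probability measure by construction; the Skoda constants $\alpha,A$ are uniform by Theorem \ref{UniformSkodamaintheorem}; and the volume $V_t$ is uniformly bounded above, since
\[
V_t=\frac{1}{|\log|t||^{\,n}}\int_{X_t}(\omega_X|_{X_t})^n=\frac{\deg_L X_t}{|\log|t||^{\,n}},
\]
where $\deg_L X_t=(c_1(L)^n\cdot X_t)$ is a topological invariant, constant along the family; hence $V_t\le \deg_L X_t$ once $|t|\le e^{-1}$ (in fact $V_t\to0$).

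Internally, the Skoda input enters Kolodziej through the standard capacity--domination step: applying $\int_{X_t}e^{-\alpha u}\,d\mu_t\le A$ to the normalised global extremal function of a compact set $K$ yields a power--law bound $\mu_t(K)\le A\,\mathrm{cap}_{\omega_t}(K)^{\,\alpha/n}$, after which the volume $V_t$ appears only as the multiplicative weight in $t^n\,\mathrm{cap}_{\omega_t}(\{\phi_t<-s-t\})\le V_t\,\mu_t(\{\phi_t<-s\})$, and Kolodziej's iteration returns $\norm{\phi_t}_{L^\infty}\le C(n,\alpha,A,V_t)$.

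The genuine difficulty of the paper lives in Theorem \ref{UniformSkodamaintheorem}; for this corollary the only points I would verify with care are, first, that the extremal--function/capacity comparison used in the domination step carries constants depending on $n$ alone and not on $\omega_t$, so that no degenerating quantity re--enters through the back door, and second, that $C(n,\alpha,A,V_t)$ remains bounded in the limit $V_t\to0$. The latter is expected to be harmless because $V_t$ enters only as a multiplicative factor, so shrinking the total mass can only decrease the oscillation; concretely I anticipate $C$ is monotone in the volume, giving $\sup_{0<|t|\le e^{-1}}C(n,\alpha,A,V_t)\le C(n,\alpha,A,\deg_L X_t)<\infty$, the desired uniform bound.
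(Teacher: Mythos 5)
Your core reduction is exactly the paper's proof: the paper deduces Theorem \ref{UniformLinfty} in one line by feeding the uniform Skoda constants $(\alpha,A)$ of Theorem \ref{UniformSkodamaintheorem} into Theorem \ref{pluripotentialthm1}, and your proposal does the same. However, two points in your supplementary discussion need correction. First, the volume dependence you flag as a point to verify is spurious: Theorem \ref{pluripotentialthm1} is stated for the equation already normalized by $\int_Y\omega^n$, and its constant is $C(n,\alpha,A)$ with \emph{no} dependence on the volume; uniformity as $\int_Y\omega^n\to 0$ is precisely the content of the reference \cite{EGZ2} on which the quoted theorem rests, and it cannot be recovered by your monotonicity heuristic --- historically the small-volume regime is the delicate case, which is why \cite{EGZ2} exists at all, and ``shrinking the total mass can only decrease the oscillation'' is not an argument. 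Second, your displayed capacity-domination inequality $\mu_t(K)\le A\,\mathrm{cap}_{\omega_t}(K)^{\alpha/n}$ is false as written once $V_t$ is small: since every $v\in PSH(X_t,\omega_t)$ with $-1\le v\le 0$ satisfies $\int_{X_t}(\omega_t+\sqrt{-1}\ddbar v)^n=V_t$, one has $\mathrm{cap}_{\omega_t}(X_t)=V_t\to 0$, so taking $K=X_t$ would give $1=\mu_t(X_t)\le A\,V_t^{\alpha/n}\to 0$, a contradiction. The correct statement, obtained from the Skoda bound applied to the sup-normalized extremal function together with the Alexander--Taylor comparison, is exponential decay of $\mu_t(K)$ in $\bigl(V_t/\mathrm{cap}_{\omega_t}(K)\bigr)^{1/n}$, i.e.\ decay in terms of the \emph{volume-normalized} capacity $\mathrm{cap}_{\omega_t}(K)/V_t$; it is exactly this normalization, propagated through Kolodziej's iteration, that makes the final constant volume-free. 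None of this affects your main reduction, which is complete and identical to the paper's once Theorem \ref{pluripotentialthm1} is quoted as stated.
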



\begin{rmk}
Applications of pluripotential theory to Calabi-Yau metrics when the K\"ahler class is degenerating can be found in \cite{EGZ2}, which is used further in \cite{Tosatti}. Our main results generalize certain aspects of \cite{LiSYZ} which focuses on  degenerating projective hypersurfaces near the large complex structure limit.
\end{rmk}

\begin{Acknowledgement}
The author is a 2020 Clay Research Fellow, currently based at the Institute for Advanced Study. He thanks Song Sun and Simon Donaldson for discussions, and Sebastien Boucksom, Eleonora Di Nezza, and Valentino Tossati for comments.
\end{Acknowledgement}

\section{Uniform Skoda inequality}\label{UniformSkodachapter}

We work in the context of semistable simple normal crossing (snc) models. Concretely,
let $\pi: \mathcal{X}\to \mathbb{D}_t$ be a flat projective family of $n$-dimensional varieties over a small disc $\mathbb{D}_t$, such that the total space $\mathcal{X}$ is smooth, $\pi$ is a submersion over the punctured disc with connected fibres, the central fibre $X_0$ is \emph{reduced} and is an snc divisor in $\mathcal{X}$. Denote the components of $X_0$ as $E_i$ with $i\in I$. We equip $\mathcal{X}$ with a fixed background K\"ahler metric $\omega_{\mathcal{X}}$, inducing a distance function $d_{\omega_{\mathcal{X}}}$. This induces a family of \emph{rescaled} K\"ahler metrics $\omega_t= \frac{1}{ |\log |t||  } \omega_{ \mathcal{X}  }|_{X_t  }$. We shall derive a uniform Skoda type estimate (\ref{Skodaassumption}) for $(X_t, \omega_t, d\mu_t)$, where $d\mu_t$ belongs to a natural class of measures. The main result is Theorem \ref{UniformSkodasemistable}.

\subsection{Quantitative stratification and good test functions}

There is a \emph{quantitative stratification} on any smooth fibre $X_t$ induced by the intersection pattern of $E_i$: for $J\subset I$ such that $E_J=\cap_{i\in J} E_i\neq \emptyset$, the corresponding statum is 
\[
E_J^0= \{   x\in X_t | d_{\omega_{\mathcal{X}}}( x, E_J )\lesssim \epsilon     \} \setminus \{    x\in X_t | d_{\omega_{\mathcal{X}}}( x, E_{J'} )\lesssim \epsilon ,  \text{ some } J'\supsetneq J     \},
\] 
namely a small `$\epsilon$-tubular neighbourhood' of $E_J$ minus the deeper strata. For $J=\{ i\}$ we write $E_i^0= E_{\{i\}  }^0$. Here the disc $\mathbb{D}_t$ and the small parameter $\epsilon\ll 1$ can be shrinked for convenience; the essential thing is that all parameters should be independent of the coordinate $t$.

It is useful to introduce local coordinates $\{ z_i \}_0^{n}$ around $E_J\subset \mathcal{X}$, such that $z_0, \ldots, z_p$ with $p=|J|-1$ are the local defining equations of $E_j$ for $j\in J$, and locally the  fibration map is $t=z_0\ldots z_p$.  Then up to uniform equivalence, locally
\begin{equation*}
\omega_{ \mathcal{X} }\sim \sum_0^{n} \sqrt{-1}dz_i\wedge d\bar{z}_i.
\end{equation*}

The rest of this section is devoted to the construction of good test functions.
Given any of these divisors $E_0$, we can find a nonnegative function $h=h_{E_0}$ on $\mathcal{X}$, such that
\begin{itemize}
\item 

In the local charts near $E_0$ with $z_0$ being the defining function for $E_0$,
\[
h= |z_0|^2 \tilde{h}(z_0, \ldots z_n)
\]
for some positive smooth function $\tilde{h}$;
\item Away from $E_0$ the function $h$ is comparable to 1.
\end{itemize}
 
We observe 
\begin{itemize}
	\item
The form $\partial \bar{\partial} \log h= \partial \bar{\partial} \log \tilde{h}$ extends smoothly;
\item
For $|t|^2\ll h\lesssim \delta\ll 1$ inside $X_t$, so that $|z_0|\gg |t|$, by a local calculation  near $E_J$ with $0\in J$, 
\[
\begin{split}
 &\sqrt{-1}   \partial  \log h   \wedge \bar{\partial } \log h   \wedge \omega_{ \mathcal{X}  }|_{X_t}^{n-1} \geq \frac{ \sqrt{-1} }{2|z_0|^2} dz_0\wedge d\bar{z}_0\wedge\omega_{ \mathcal{X}  }|_{X_t}^{n-1}  
 \\
 &\gtrsim   \min\{  \frac{1}{|z_0|^2}, \max_{1\leq i\leq p} |z_i|^{-2}  \} \omega_{ \mathcal{X}  }|_{X_t}^n
 \\
 &
 \gtrsim  
  \min\{  \frac{1}{h}, h^{1/p} |t|^{-2/p}  \}\omega_{ \mathcal{X}  }|_{X_t}^n
  \\
  & 
 \gtrsim  
 \min\{  \frac{1}{h}, h^{1/n} |t|^{-2/n}  \}\omega_{ \mathcal{X}  }|_{X_t}^n .
\end{split}
\]
Here in the first line we need to fix $\delta\ll 1$ so that the effect of $\partial \log h$ is dominated by $d\log z_0$.  The second line uses that for $1\leq k\leq p$, the volume forms on $X_t$
\[
\frac{1}{|z_0|^2} dz_0\wedge d\bar{z}_0\wedge
\prod_{j\neq k, 1\leq j\leq n} \sqrt{-1}dz_j\wedge d\bar{z}_j
\sim
\frac{1}{|z_k|^2} \prod_{ 1\leq j\leq n} \sqrt{-1}dz_j\wedge d\bar{z}_j,
\]
and the third line uses $|t|=|z_0\ldots z_p|\sim h^{1/2}|z_1\ldots z_p|$.

\item
On $X_t$ the function $h\gtrsim |t|^2$. 
The region $\{ |t|^2\sim h  \}\subset X_t$ can be identified as $E_0^0$, namely the vicinity of $E_0$ away from deeper strata. Here
\[
\begin{split}
&\sqrt{-1}   \partial  \log h   \wedge \bar{\partial } \log h   \wedge \omega_{ \mathcal{X}  }|_{X_t}^{n-1} \geq 0,
\\
& 
\sqrt{-1}   \partial  \bar{\partial } \log h   \wedge \omega_{ \mathcal{X}  }|_{X_t}^{n-1} \gtrsim -\omega_{ \mathcal{X}  }|_{X_t}^{n}  .
\end{split}
\]

\end{itemize}

\begin{lem}\label{goodtestfunction}
	(Good test function) Given the divisor $E_0$, we can choose a $C^2$ test function $v$ on $X_t$ such that the following hold uniformly for small $t\neq 0$:
	\begin{itemize}
		\item $v$ is zero for $h\geq \delta$.
		
		\item Globally $0\leq v\leq -\log |t|$.

		\item
		For any divisor $E_j$ intersecting $E_0$, there is a subset of $E_j^0$ with measure at least $C_2$ on which $\sqrt{-1} \partial \bar{\partial}v \wedge \omega_{ \mathcal{X}  }|_{X_t}^{n-1}  \geq C_3\omega_{ \mathcal{X}  }|_{X_t}^n $.

		\item 
		For $C_4 |t|^2 \leq h\leq \delta$, the form $\sqrt{-1}\partial \bar{\partial} v\wedge \omega_{ \mathcal{X}  }|_{X_t}^{n-1} \geq 0$.

		\item 
		For $h\leq C_4 |t|^2$, the form $\sqrt{-1}\partial \bar{\partial} v \wedge \omega_{ \mathcal{X}  }|_{X_t}^{n-1}\geq -C_5\omega_{ \mathcal{X}  }|_{X_t}^n $.
	
	\end{itemize}
\end{lem}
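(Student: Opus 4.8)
The plan is to take $v$ as a suitable increasing convex profile of the single function $-\log h$, where $h=h_{E_0}$ is the function constructed above, and to read off the five properties directly from the three bullet estimates on $h$ already in hand. Concretely, set $v=F(-\log h)$ for a $C^2$ profile $F\colon\mathbb{R}\to[0,-\log|t|]$ with $F\equiv 0$ on $(-\infty,w_0]$, where $w_0=\log(1/\delta)$; since $h\geq\delta$ is exactly $-\log h\leq w_0$, this gives the first property, and $0\leq F\leq -\log|t|$ will give the second. A direct computation gives
\[
\sqrt{-1}\,\partial\bar\partial v\wedge\omega_{\mathcal{X}}|_{X_t}^{n-1}=F''(-\log h)\,P-F'(-\log h)\,Q,
\]
where $P:=\sqrt{-1}\,\partial\log h\wedge\bar\partial\log h\wedge\omega_{\mathcal{X}}|_{X_t}^{n-1}\geq 0$ and $Q:=\sqrt{-1}\,\partial\bar\partial\log h\wedge\omega_{\mathcal{X}}|_{X_t}^{n-1}$ is uniformly bounded, $|Q|\lesssim\omega_{\mathcal{X}}|_{X_t}^{n}$, because $\partial\bar\partial\log h=\partial\bar\partial\log\tilde h$ extends smoothly. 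Writing $w=-\log h$, which ranges over $[\,\sim 0,\ \sim 2|\log|t||\,]$ on $X_t$, the established collar estimate reads $P\gtrsim\min\{e^{w},\,e^{(2|\log|t||-w)/n}\}\,\omega_{\mathcal{X}}|_{X_t}^{n}$ for $|t|^2\ll h\lesssim\delta$; this is exponentially large in the interior of the $w$-range and degenerates to order $1$ only near the two ends $w\approx 0$ and $w\approx 2|\log|t||$.

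I would then choose $F$ convex, $F''\geq 0$ throughout, with the second derivative concentrated where $P$ is merely of order $1$. On the fixed interval $w\in[w_0,w_1]$ with $w_1=\log(1/\delta^2)$ — corresponding to $h\in[\delta^2,\delta]$, which I claim meets each relevant $E_j^0$ in a set of definite measure — I impose $F''\geq c>0$. There $P\gtrsim 1/h\gtrsim 1/\delta$, so $F''P-F'|Q|\geq c/\delta-C'$ can be made $\geq C_3>0$ by taking $\delta$ small, which is the third property. To verify the measure bound I work in the local model $t=z_0z_j$ near $E_0\cap E_j$: on $X_t$ the fibre is parametrised by $(z_0,z_2,\dots,z_n)$ with $z_j=t/z_0$, and the portion with $|z_0|\in(\delta,\sqrt\delta)$ (equivalently $h\in[\delta^2,\delta]$) lies in $E_j^0$ and has $z_0$ a genuine fibre coordinate, with the induced metric uniformly equivalent to the Euclidean one since $|dz_j|=|t|/|z_0|^2\,|dz_0|\ll|dz_0|$; hence its $\omega_{\mathcal{X}}|_{X_t}^{n}$-volume is bounded below by a constant $C_2\sim\delta$ independent of $t$.

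Away from this end, on $w\in[w_1,\,2|\log|t||-\log C_4]$, the term $P$ is large (bounded below by $\min\{1/\delta^2,\,C_4^{1/n}\}$, using that $e^w$ and $e^{(2|\log|t||-w)/n}$ attain their minimum over the interval at its endpoints), so even an exponentially small positive $F''$ dominates $F'|Q|$ and yields $\sqrt{-1}\,\partial\bar\partial v\wedge\omega_{\mathcal{X}}|_{X_t}^{n-1}\geq 0$, the fourth property; here I keep $F$ essentially linear with slope $\leq\tfrac12$, which together with convexity and $F(w_0)=0$ forces $v\leq-\log|t|$ over the range of length $\sim 2|\log|t||$ and gives the second property. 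Finally, on $h\leq C_4|t|^2$ (the region $E_0^0$) I use only $F''\geq 0$ and $P\geq 0$ to get $F''P\geq 0$, while $-F'Q\geq -F'|Q|\geq -C_5\,\omega_{\mathcal{X}}|_{X_t}^{n}$ because $F'$ and $Q$ are bounded; this is the fifth property.

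The main obstacle I anticipate is the simultaneous budgeting of the profile $F$, since properties two, three and four pull in opposite directions: property three forces a definite amount of convexity ($F''\geq c$, hence a slope increment $c(w_1-w_0)$) precisely on the end interval where $P$ is of order $1$, whereas property two caps the total slope by $\sim\tfrac12$ over a range of length $\sim 2|\log|t||$. The resolution is that the small-$P$ interval near $E_j^0$ has the fixed length $w_1-w_0=\log(1/\delta)$, so taking $\delta$ small (with $c\sim 1/\log(1/\delta)$) keeps the increment within the slope budget while still sending $c/\delta\to\infty$, so that $C_3>0$; and taking $C_4$ large pushes the upper end of the property-four region into the zone where $P$ is already exponentially large, so no further convexity is needed there. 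The remaining work is bookkeeping: checking that these choices of $\delta,C_4$ and the shape of $F''$ can be made uniformly in $t$, and that smoothing $F$ to a genuine $C^2$ function preserves all five inequalities.
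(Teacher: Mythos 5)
Your ansatz $v=F(-\log h)$ is exactly the paper's (the paper writes $v=\Phi\circ\log h$ with $\Phi$ convex non-increasing, i.e. $\Phi(x)=F(-x)$), and your treatment of properties one, two, three and five matches the paper's construction; your measure count for property three is a correct fleshing-out of what the paper leaves implicit. The gap is in property four, at the bottom of the neck. On $w\in[w_1,\,2|\log|t||-\log C_4]$ you assert that, since $P$ is ``large'', an exponentially small $F''$ dominates $F'|Q|$. But the lower bound you yourself record is $P\gtrsim\min\{\delta^{-2},C_4^{1/n}\}$, and at the right end of that interval (where $h\sim C_4|t|^2$) this is sharp: $\min\{1/h,\,h^{1/n}|t|^{-2/n}\}=C_4^{1/n}$ there, a constant independent of $t$. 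After the convexity window, $F'\geq c(w_1-w_0)>0$ (convexity makes $F'$ non-decreasing) and you only know $|Q|\lesssim 1$, so $F'|Q|$ can be of definite size; an exponentially small $F''$ then makes $F''P$ exponentially small and the inequality $F''P\geq F'Q$ fails on the whole band $C_4|t|^2\leq h\leq M|t|^2$ for any fixed $M$. Your proposed remedy in the final paragraph --- take $C_4$ large so that the upper end of the property-four region sits ``where $P$ is already exponentially large'' --- cannot work: for any constant $C_4$ the lower bound on $P$ at $h=C_4|t|^2$ is exactly $C_4^{1/n}$, large but never growing as $t\to 0$; the genuinely exponentially-large zone recedes from $h=C_4|t|^2$ as $t\to 0$.

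The missing idea is the paper's pointwise ODE on the log-derivative of the slope: instead of a two-zone budget (definite $F''$ on a fixed window, negligible $F''$ elsewhere), impose
\[
\frac{F''}{F'}\;\geq\; C\,\max\bigl\{e^{-w},\,e^{-(2|\log|t||-w)/n}\bigr\}
\]
at every point of the neck, with $C$ larger than the ratio $C_2'/C_1'$ of the comparability constants. Since $\max\{h,\,h^{-1/n}|t|^{2/n}\}\cdot\min\{1/h,\,h^{1/n}|t|^{-2/n}\}=1$, this gives $F''P-F'|Q|\geq F'(C_1'C-C_2')\geq 0$ everywhere on $C_4|t|^2\leq h\leq\delta$, including both ends. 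The height bound $v\leq-\log|t|$ survives precisely because the right-hand side of the ODE has $t$-uniformly bounded integral over the neck (at most $\delta+n$ up to constants), so $\log F'$ increases by a bounded amount, $F'$ stays bounded (and can be normalised $\leq 1/2$), and hence $F\lesssim|\log|t||$. Your slope-budget discussion correctly identifies the tension between properties two, three and four, but resolves it incorrectly; with the ratio ODE in place of your ``essentially linear'' middle segment, the rest of your argument goes through and reproduces the paper's proof.
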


\begin{proof}
We seek the test function in the form $v=\Phi\circ \log h$ for some convex, non-increasing, non-negative $C^2$-function $\Phi$. Compute
\[
\partial \bar{\partial} v=  \Phi'' \partial  \log h   \wedge \bar{\partial } \log h   + \Phi' ( \partial \bar{\partial } \log \tilde{h}),
\]
so using the properties of $h$ above,
\[
\sqrt{-1} \partial \bar{\partial}v \wedge \omega_{ \mathcal{X}  }|_{X_t}^{n-1} 
\geq
\begin{cases}
\left( \Phi'' C_1' \min\{  \frac{1}{h}, h^{1/n} |t|^{-2/n}  \} +\Phi'C_2'  \right)    \omega_{ \mathcal{X}  }|_{X_t}^n     , \quad &|t|^2\lesssim h\leq \delta,
\\
C_3' \Phi' \omega_{ \mathcal{X}  }|_{X_t}^n, \quad & h\lesssim |t|^2.
\end{cases}
\]

To satisfy our conditions on $v$, it is enough to have
\begin{itemize}
\item
$\Phi(x)=0$ for $x\geq \log \delta$.
\item
$|\Phi'(x)|\lesssim 1$ for $2\log |t|\lesssim x\leq \log \delta$. 
\item
$ -\frac{d}{dx} \log |\Phi'|=  \frac{\Phi''}{|\Phi'|  }  \geq C_4' \max\{ h, h^{-1/n} |t|^{2/n}  \}$ for $ h=e^x\leq \delta,$ where
$C_4'> C_2'/C_1'$. Morever, for $x<\delta$, we need $\Phi'<0$ so that  
$\sqrt{-1} \partial \bar{\partial}v \wedge \omega_{ \mathcal{X}  }|_{X_t}^{n-1}$ has some strict positivity for $\delta/2<h<\delta$. Notice convexity of $\Phi$ is a consequence of these conditions.

\end{itemize}

To construct such $\Phi$, we can prescribe the behaviour near $x=\log \delta$ by
$
\Phi'(x)= -e^{1/(x-\log \delta  )}
$ for $x< \log \delta$,
and match this with a solution to
\[
-\frac{d}{dx} \log |\Phi'|  = C_4' \max\{e^x, e^{-x/n} |t|^{2/n}  \}, \quad x<\log \delta
\]
for some large enough $C_4'$, such that $\Phi'$ remains $C^1$ at the matching point. Integration shows that $ |\Phi'|$ remains uniformly bounded at $h\sim |t|^2$, or equivalently $x\sim 2\log |t|$.
\end{proof}

\subsection{Convexity}

Consider $u\in PSH(X_t, \omega_t)$ normalised to $\sup_{X_t} u=0$. Equivalently, we can cover $X_t$ by a bounded number of charts as before, and use the
 local potentials of $\omega_{\mathcal{X} }$ to represent $u$  as a collection of local plurisubharmonic (psh) functions $\{ u_\beta\}$ with $|u_\beta-u|\leq C$.

\begin{lem}(Convexity)\label{pshconvexity}
	Let $\phi$ be any psh function on the open subset of $  \{   1<|z_i|<\Lambda, i=1,\ldots p, |z_k|<1, k=p+1, \ldots n \}\subset (\C^*)^p\times \C^{n-p}$. Then the function \[
	\bar{\phi}(x_1, \ldots x_n)= \frac{1}{(2\pi)^n} \int_{D(1)^{n-p} } \prod_{p+1}^n \sqrt{-1} dz_k \wedge d\bar{z}_k 
		\int_{T^p} \phi( e^{x_1+i\theta_1}, \ldots e^{x_p+i\theta_p}) d\theta_1\ldots d\theta_p
	\]
	is convex. 
\end{lem}

\begin{proof}
For any choice of $\theta_i$  the function  $\phi(z_1 e^{i\theta_1}, \ldots z_p e^{i\theta_n}, z_{p+1},\ldots, z_n)$ is psh, since the $T^p$-action on $(\C^*)^p$ is holomorphic. Thus the average function $\bar{\phi}$ is also psh as a function of $z_1,\ldots z_p$. Any $T^p$-invariant psh function must be convex in the log coordinates, because for $x_i=\log |z_i|$,
	\[
	\sqrt{-1}\partial \bar{\partial} \bar{\phi}= \frac{1}{4} \sum \frac{\partial^2 \bar{\phi}}{  \partial x_i \partial x_j  } \sqrt{-1} d\log z_i \wedge d\overline{\log z_j} \geq 0.
	\]
\end{proof}

\subsection{Harnack type inequality}

\begin{lem}
	(Almost maximum on top strata)
	For $u\in PSH(X_t, \omega_t)$ normalised to $\sup_{X_t} u=0$, there is some $i\in I$, such that
	\[
	\sup_{E_i^0} u \geq -C, \quad \int_{E_i^0} u \omega_{ \mathcal{X} }|_{X_t}^n \geq -C'.
	\]

\end{lem}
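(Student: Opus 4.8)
The plan is to combine the toric convexity of Lemma~\ref{pshconvexity} with an interior-to-boundary propagation across the strata to locate the top stratum, and then to run a weak Harnack inequality to upgrade the resulting pointwise bound to an integral bound. First I would fix the tropical picture. Near a stratum $E_J$ with local coordinates satisfying $t=z_0\cdots z_p$, the fibre $X_t$ is cut out by $z_0\cdots z_p=t$, so in the logarithmic coordinates $x_i=\log|z_i|$ the part of $X_t$ in this chart is the polytope $\{x_i\le 0\ (i\in J),\ \sum_{i\in J}x_i=\log|t|\}$ (times the transverse polydisc in $z_{p+1},\dots,z_n$). Its vertices, where one $x_i\sim\log|t|$ and the remaining ones are $\sim 0$, are exactly the top strata $E_i^0$ with $i\in J$, while its facets $\{x_i=0\}$ correspond to the shallower strata $E_{J\setminus\{i\}}^0$. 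Throughout I represent $u$ by the local psh potentials $u_\beta$ with $|u_\beta-u|\le C$, so that $u_\beta\le C$ since $u\le 0$.

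For the supremum statement I would argue by descending induction on the depth $|J|$ of the stratum. Since $\sup_{X_t}u=0$, choose $x^\ast$ with $u(x^\ast)\ge-1$, lying in some $E_J^0$; if $|J|=1$ we are already on a top stratum. Otherwise I symmetrise in the torus directions: by the same mechanism as in Lemma~\ref{pshconvexity}, the supremum of $u_\beta$ over the torus orbits (and transverse polydisc) is convex in the variables $x_i$, it dominates $u_\beta$, and it is $\ge -C$ at the image of $x^\ast$. A convex function on a polytope attains its maximum on the boundary, hence on a facet; translating back, there is a point of a shallower stratum $E_{J'}^0$ with $|J'|<|J|$ at which $\sup_{E_{J'}^0}u\ge -C'$, the loss being controlled by $\sup|u_\beta-u|$. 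Iterating at most $n$ times — the depth of the stratification and the number of charts are bounded — reaches a top stratum $E_i^0$ with $\sup_{E_i^0}u\ge -C$.

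For the integral bound I would pass to the Harnack viewpoint. Setting $w=-u\ge 0$, the condition $u\in PSH(X_t,\omega_t)$ with $\omega_t=\frac{1}{|\log|t||}\omega_{\mathcal X}|_{X_t}$ yields $\sqrt{-1}\partial\bar\partial u\ge-\omega_t$, so taking the trace in the fixed metric $\omega_{\mathcal X}$ gives $\Delta_{\omega_{\mathcal X}}w\le C$ for a uniform $C$; thus $w$ is a nonnegative supersolution on $(X_t,\omega_{\mathcal X})$ with bounded right-hand side. The stratum $E_i^0$ is connected and, measured in the fixed metric $\omega_{\mathcal X}$, has uniformly bounded geometry and diameter as $t\to0$. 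I would then apply the weak Harnack inequality for nonnegative supersolutions, chaining it along a bounded covering of $E_i^0$ by balls and starting from the near-maximum point where $w\le C$, to obtain $\int_{E_i^0} w\,\omega_{\mathcal X}|_{X_t}^n\le C'$, that is $\int_{E_i^0} u\,\omega_{\mathcal X}|_{X_t}^n\ge -C'$.

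The main obstacle is the interface between the toric/convex picture and the genuinely global function: one must verify that the propagation passes correctly from $E_J^0$ to the adjacent shallower strata across overlapping charts, that the polytopes and their facets are identified with the correct strata, and — most importantly — that every constant (from $|u_\beta-u|\le C$, from the bounded number of strata and charts, and from the Harnack geometry of $E_i^0$) is uniform in $t$ as the complex structure degenerates. By contrast, once the uniform bounded geometry and connectedness of $E_i^0$ in $\omega_{\mathcal X}$ are established, the Harnack step is standard.
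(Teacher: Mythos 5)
Your proposal is correct and follows essentially the same route as the paper: the sup bound is propagated from the stratum containing the maximum down to a top stratum $E_i^0$ via convexity of torus-symmetrized potentials in the log coordinates, and the $L^1$ bound then follows by chaining a Harnack-type inequality over a uniformly bounded number of balls in $E_i^0$. The only (harmless) deviations are that you symmetrize by taking the supremum over torus orbits, whereas the paper takes the average (Lemma \ref{pshconvexity}) and feeds in the mean value inequality, and that you invoke the elliptic weak Harnack inequality for $\Delta_{\omega_{\mathcal X}|_{X_t}} w \le C$ where the paper uses the sub-mean value property of the local psh potentials directly.
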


\begin{proof}
	Let the global maximum of $u$ be achieved at $q_0\in E_J^0$, and denote the local potential of $u$ as $u_\beta$. Without loss of generality $u_\beta\leq 0$. We have $u_\beta(q_0) \geq -C$ since $|u-u_\beta|\leq C$. Applying the mean value inequality around $q_0$, we find that the local average function $\bar{u}_\beta$ produced in Lemma \ref{pshconvexity} satisfies $\sup \bar{u}_\beta\geq -C$ for another uniform constant $C$. By the convexity of $\bar{u}_\beta$ its sup is almost achieved at the boundary of the chart, which is contained in a union of less deep strata $E_{J'}^0$ with $J'\subsetneq J$. Thus we can find a point $q'$ with $u(q')\geq -C$ that belongs to a less deep stratum; an induction shows that there is some $i\in I$, such that $\sup _{E_i^0} u \geq -C$.

For the $L^1$-bound we recall the following Harnack inequality argument. Suppose a coordinate ball $B(q, 3R)$ is contained in a local chart in a small neighbourhood of $E_i^0$.
Applying the mean value inequality to the local psh function associated to $u$, we see for $y\in B(q,R)$ that
\[
u(y) \leq  C+ \dashint_{B(y,2R)} u \lesssim 1+ \dashint_{B(q,R)} u.
\]
hence the Harnack inequality
\[
\dashint_{B(q,R)} |u |\lesssim 1+ \inf_{B(q, R)} (-u). 
\]
Applying this to a chain of balls connecting any two points in $E_i^0$ gives the $L^1$-bound $\int_{E_i^0} u \omega_{ \mathcal{X} }|_{X_t}^n \geq -C'$; the bound is uniform because the number of balls involved in the chain can be controlled independent of $t$.
\end{proof}

\begin{prop}(Almost maximum on top strata II)\label{AlmostmaximumII}
	There is a uniform lower bound for all $|t|\ll 1$ and all $i\in I$:
\begin{equation}
\sup_{E_i^0} u \geq -C, \quad \int_{E_i^0} u \omega_{ \mathcal{X} }|_{X_t}^n \geq -C'.
\end{equation}
\end{prop}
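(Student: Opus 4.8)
The plan is to upgrade the previous lemma, which guarantees the almost-maximum property on \emph{some} top stratum $E_i^0$, to the stronger statement that it holds on \emph{every} top stratum $E_i^0$. The key mechanism is propagation of the bound across the intersection pattern of the central fibre, using the connectedness of $X_t$ together with the good test functions from Lemma \ref{goodtestfunction}. First I would fix the index $i_0\in I$ supplied by the previous lemma, for which $\sup_{E_{i_0}^0}u\geq -C$, and I would aim to show that the bound transfers to any $E_j^0$ whose component $E_j$ meets $E_{i_0}$; iterating along chains of mutually intersecting components then covers all of $I$, since $X_0$ is connected (its dual intersection complex is connected) and the number of components is bounded independent of $t$.

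The heart of the argument is the single propagation step from $E_{i_0}^0$ to an adjacent $E_j^0$. Here I would invoke the good test function $v=v_{E_{i_0}}$ associated to $E_{i_0}$ from Lemma \ref{goodtestfunction}. The idea is to apply the maximum principle (or a comparison/integration argument) to the function $u+\lambda v$ for a suitable small constant $\lambda>0$. Because $v$ vanishes for $h\geq\delta$ and is bounded by $-\log|t|=|\log|t||$, while $\sqrt{-1}\partial\bar\partial v\wedge\omega_{\mathcal X}|_{X_t}^{n-1}$ is nonnegative in the bulk region $C_4|t|^2\leq h\leq\delta$ and controllably negative only on the thin region $h\leq C_4|t|^2$, the potential $\lambda v$ is essentially $\omega_t$-plurisubharmonic up to a controlled defect. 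Crucially, the third bullet of Lemma \ref{goodtestfunction} provides a subset of $E_j^0$ of definite measure $\geq C_2$ on which $\sqrt{-1}\partial\bar\partial v\wedge\omega_{\mathcal X}|_{X_t}^{n-1}\geq C_3\omega_{\mathcal X}|_{X_t}^n$. I would exploit this positivity: since $u$ has a definite mass near $E_{i_0}^0$ by the inductive hypothesis, and $v$ moves mass toward $E_j^0$, a Stokes/integration-by-parts argument pairing $\sqrt{-1}\partial\bar\partial v\wedge\omega_{\mathcal X}|_{X_t}^{n-1}$ against $u$ forces $u$ to be not too negative on that definite-measure subset of $E_j^0$, yielding both the sup bound $\sup_{E_j^0}u\geq -C$ and, via the same Harnack-along-chains argument used in the previous lemma, the $L^1$-bound $\int_{E_j^0}u\,\omega_{\mathcal X}|_{X_t}^n\geq -C'$.

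The main obstacle, which I expect to require the most care, is controlling the \emph{uniformity} of the constants through the propagation, and in particular absorbing the negative contribution of $v$ coming from the deep region $h\leq C_4|t|^2$. The negative part of $\sqrt{-1}\partial\bar\partial v$ is bounded below by $-C_5\omega_{\mathcal X}|_{X_t}^n$ only on $\{h\leq C_4|t|^2\}$, whose $\omega_{\mathcal X}|_{X_t}^n$-volume shrinks like $|t|^2$ (up to logarithmic factors), so one must check that the loss from this region is $o(1)$ and hence negligible against the definite positive contribution $\sim C_2C_3$ on $E_j^0$. One must simultaneously track the normalisation $0\leq v\leq|\log|t||$ against the rescaling $\omega_t=\frac{1}{|\log|t||}\omega_{\mathcal X}|_{X_t}$, so that $\lambda v$ contributes an $O(1)$ amount to the relevant potential rather than a blowing-up one; this is exactly why the bound $v\leq -\log|t|$ in Lemma \ref{goodtestfunction} is stated with that precise growth. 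Finally, since the induction runs over a chain of components, I would need the length of the chain to be bounded independent of $t$ — which holds because the combinatorial type of $X_0$ is fixed — so that the constants $C, C'$ degrade only by a bounded factor at each step and remain uniform in the limit $|t|\to 0$.
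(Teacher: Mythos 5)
Your overall skeleton --- propagate the bound from the stratum supplied by the previous lemma to adjacent strata via the good test function of Lemma \ref{goodtestfunction}, pair $u$ against $\sqrt{-1}\partial\bar{\partial} v\wedge\omega_{\mathcal{X}}|_{X_t}^{n-1}$ by integration by parts, iterate using connectedness of $X_0$, and recover the $L^1$-bound at each step by the Harnack argument --- is exactly the paper's proof. But your treatment of the one genuinely delicate step is wrong. You claim that the region $\{h\leq C_4|t|^2\}$, where $\sqrt{-1}\partial\bar{\partial} v\wedge\omega_{\mathcal{X}}|_{X_t}^{n-1}$ can be negative, has $\omega_{\mathcal{X}}|_{X_t}^n$-volume of order $|t|^2$, so that its contribution is $o(1)$. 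This is false: since $h\gtrsim|t|^2$ on $X_t$, the set $\{h\lesssim|t|^2\}$ is precisely the top stratum $E_{i_0}^0$ itself --- the part of $X_t$ lying close to $E_{i_0}$ and away from deeper strata (in local coordinates with $t=z_0\cdots z_p$ and $h\sim|z_0|^2$, it is the set where $|z_i|\sim 1$ for $i=1,\dots,p$) --- and its volume is of order $1$, comparable to the volume of the divisor $E_{i_0}$. Moreover, even if that volume were small, a volume bound alone could never control the integral of $u$ against the negative part of the form, because an $\omega_t$-psh function normalised by $\sup u=0$ admits no pointwise lower bound.

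The correct absorption mechanism, which your proposal never states, is a sign argument combined with the inductive $L^1$-bound: since $u\leq 0$, the signed measure $u\,\sqrt{-1}\partial\bar{\partial} v\wedge\omega_{\mathcal{X}}|_{X_t}^{n-1}$ is \emph{nonnegative} exactly where the form is negative, so its positive part is supported in $E_{i_0}^0$ and has mass at most $C_5\int_{E_{i_0}^0}|u|\,\omega_{\mathcal{X}}|_{X_t}^n\leq C$ --- this is precisely where the hypothesis $\int_{E_{i_0}^0}u\,\omega_{\mathcal{X}}|_{X_t}^n\geq -C$ (from the previous lemma, and re-established at each propagation step via Harnack) is consumed. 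Since the total integral is $O(1)$ by the integration by parts, the negative part of the signed measure then also has mass $O(1)$, and testing it against the subset of measure $\geq C_2$ in $E_j^0$ where the form is $\geq C_3\,\omega_{\mathcal{X}}|_{X_t}^n$ yields $\sup_{E_j^0}u\geq -C$. Your phrase ``$u$ has a definite mass near $E_{i_0}^0$ by the inductive hypothesis'' gestures at this, but the mechanism you actually propose (volume smallness) fails, and this is the only place where the inductive $L^1$-control enters; without it the propagation does not close.
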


\begin{proof}
The $L^1$-estimate follows from the sup estimate as above, so the real problem is to transfer bounds between different $E_i^0$. This is nontrivial because the necks connecting $E_i^0$ with each other are highly degenerate.

Given one divisor $E_0$ such that $
 \int_{E_0^0} u \omega_{ \mathcal{X} }|_{X_t}^n \geq -C,
$
we produce a good test function $v$ by Lemma \ref{goodtestfunction}. Integrating by parts,
\[
\int_{X_t} v \sqrt{-1}\partial \bar{\partial} u \wedge \omega_{\mathcal{X} }|_{X_t}^{n-1} = \int_{X_t} u \sqrt{-1}\partial \bar{\partial} v \wedge \omega_{\mathcal{X} }|_{X_t}^{n-1}.
\]
The LHS is the difference of $\int_{X_t} v(\omega_t+ \sqrt{-1}\partial \bar{\partial} u) \wedge \omega_{\mathcal{X} }|_{X_t}^{n-1}$ and $\int_{X_t} v\omega_t \wedge \omega_{\mathcal{X} }|_{X_t}^{n-1}$, and since $-\log |t|\gtrsim v\geq 0$ both terms are bounded between $0$ and $C$. Thus
\[
|\int_{X_t} u \sqrt{-1}\partial \bar{\partial} v \wedge \omega_{\mathcal{X} }|_{X_t}^{n-1}|\leq C.
\]

Now the form
$\sqrt{-1}\partial \bar{\partial} v \wedge \omega_{ \mathcal{X}  }|_{X_t}^{n-1}$ can only be negative on $\{ h\sim |t|^2  \}=E_0^0$, and is bounded below by $-C  \omega_{ \mathcal{X}  }|_{X_t}^{n}$. Thus the positive part of the signed measure $u \sqrt{-1}\partial \bar{\partial} v \wedge \omega_{\mathcal{X} }|_{X_t}^{n-1}$ has total mass controlled by $ \int_{E_0^0} |u| \omega_{ \mathcal{X} }|_{X_t}^n \leq C$.
Consequently, the negative part of the signed measure must also have total mass $\leq C$.

By construction, for any divisor $E_j$ intersecting $E_0$ there is a nontrivial amount of $\sqrt{-1}\partial \bar{\partial} v \wedge \omega_{\mathcal{X} }|_{X_t}^{n-1}$-measure inside $E_j^0$. This forces $\sup_{E_j^0} u \geq -C$. To summarize, we have transferred the sup bound from $E_0^0$ to any $E_j^0$ with $E_j\cap E_0\neq \emptyset$. Since the central fibre $X_0$ is connected, in at most $|I|$ steps 
this sup bound is transferred to all $E_i^0$ with $i\in I$.
\end{proof}

\begin{rmk}
This proof is inspired by the intersection theoretic argument of \cite[section 6.1]{Boucksomsemipositive}, which can be viewed as a non-archimedean analogue.
\end{rmk}

\subsection{Local $L^1$ estimate}

Given a local chart on $E_J^0$ with $\C^*$-coordinates $z_1,\ldots z_p$ and $\C$-coordinates $z_{p+1}, \ldots, z_n$, and a point $q$ therein, we shall refer to the subregion
\[
	\{  \frac{1}{2} |z_i(q)|  \lesssim|z_i| \lesssim 2|z_i(q)|  , \quad 1\leq i\leq p    \}
\]
as a \emph{log scale}.

\begin{lem}\label{LocalL1estimate}
	(Local $L^1$-estimate)
Within every log scale there is a uniform bound on the $L^1$-average integral
\[
\dashint_{loc} |u| \prod_1^p \sqrt{-1}d\log z_i \wedge d\log \bar{z}_i \wedge \prod_{p+1}^n \sqrt{-1}d z_k \wedge d \bar{z}_k \leq C.
\]
\end{lem}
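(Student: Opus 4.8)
The plan is to establish the local $L^1$-average bound on each log scale by combining the global almost-maximum bounds from Proposition \ref{AlmostmaximumII} with the convexity structure furnished by Lemma \ref{pshconvexity}, propagated via a Harnack-type chaining argument. The key observation is that the averaged function $\bar u$ from Lemma \ref{pshconvexity} is convex in the logarithmic coordinates $x_i = \log|z_i|$, and a convex function whose $L^1$-norm we wish to control is governed by its boundary values together with upper bounds interior to the region. Since we already know from Proposition \ref{AlmostmaximumII} that $\int_{E_i^0} u\, \omega_{\mathcal{X}}|_{X_t}^n \geq -C'$ for every top stratum, the task is to transport these bounds from the top strata $E_i^0$ into the deeper strata $E_J^0$ where the log scales live.

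First I would reduce to controlling $\bar u$, the partial average over the torus directions $\theta_1,\dots,\theta_p$ and over the transverse $\C$-directions $z_{p+1},\dots,z_n$: since $|u_\beta - u| \leq C$ and since the $L^1$-average over a log scale of the measure $\prod \sqrt{-1}\,d\log z_i \wedge d\log\bar z_i \wedge \prod \sqrt{-1}\,dz_k\wedge d\bar z_k$ factors through this averaging, it suffices to bound $\dashint |\bar u|$ over the corresponding cube in the $x$-variables. Because $u_\beta \leq 0$ (after normalisation), the positive part is under control, so the real content is an $L^1$ lower bound, equivalently an $L^1$ upper bound for $-\bar u \geq 0$. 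Here convexity is decisive: a nonnegative convex function on a cube of bounded side-length in the $x$-coordinates has its $L^1$-average controlled by its values at a bounded number of reference points, for instance near the faces of the cube where the log scale meets a less deep stratum.

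Next I would set up the chaining. Each log scale sits inside some $E_J^0$, and its boundary in the $x$-directions abuts log scales lying in strictly less deep strata $E_{J'}^0$ with $J' \subsetneq J$, and ultimately in the top strata $E_i^0$, on which Proposition \ref{AlmostmaximumII} supplies the uniform $L^1$-bound. Using the Harnack inequality already recorded in the proof of the almost-maximum lemma — namely $\dashint_{B(q,R)}|u| \lesssim 1 + \inf_{B(q,R)}(-u)$ applied along a chain of coordinate balls — together with convexity of $\bar u$, I would transfer the bound inward: convexity forces $\bar u$ on the interior of the cube to lie below the linear interpolation of its (controlled) boundary values, giving an upper bound on $-\bar u$, while the integrated lower bound from the top strata anchors the other side. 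The crucial quantitative point, exactly as in the earlier lemmas, is that the number of log scales one must chain through to connect any given log scale to a top stratum is bounded independently of $t$, because the snc combinatorics and the number of charts are fixed.

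The main obstacle I anticipate is the degeneracy of the necks connecting strata of different depths, which is precisely the difficulty already flagged in Proposition \ref{AlmostmaximumII}: as $t \to 0$ the cylinders $\{1 < |z_i| < \Lambda\}$ become arbitrarily long in the $x$-coordinates, so a naive chain would involve a number of balls growing like $|\log|t||$, destroying uniformity. The resolution must again lean on convexity rather than pure Harnack chaining: convexity of $\bar u$ in $x$ means that the $L^1$-average over one log scale is controlled by the slope and endpoint values of $\bar u$ along these long cylinders, and the global mass bound $\int_{X_t} u\,\omega_{\mathcal{X}}|_{X_t}^n \geq -C$ (an immediate consequence of the almost-maximum estimates summed over strata) caps the total variation, so that no single log scale can carry a large $L^1$-average. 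Making this precise — extracting the per-log-scale bound from the convexity of $\bar u$ and the global integral constraint, uniformly in the cylinder length — is where the real work lies.
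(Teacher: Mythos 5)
Your proposal assembles the same two pillars as the paper's proof --- the torus/disc average of Lemma \ref{pshconvexity} and the anchoring of bounds on the top strata via Proposition \ref{AlmostmaximumII} --- but organizes the transfer as a Harnack chain, whereas the paper inducts on the depth of the strata: for a depth-$p$ chart the \emph{entire} boundary collar lies in strata of depth $<p$, so the inductive hypothesis plus convexity give $|\bar{u}_\beta|\leq C$ on the collar, convexity is invoked once more to extend this into the interior, and the bound on $u$ itself is recovered from $\dashint_{loc}(u_\beta-\bar{u}_\beta)=0$ together with $u_\beta\leq C$ (your ``factors through the averaging'' reduction is fine, but only because $u\leq 0$; Jensen alone goes the wrong way). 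You correctly identify that the crux of the lemma is the length $\sim|\log|t||$ of the necks in the log coordinates.

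However, your proposed resolution of that difficulty has a genuine gap, in fact two. First, the bound $\int_{X_t}u\,\omega_{\mathcal{X}}|_{X_t}^n\geq -C$ is not ``an immediate consequence of the almost-maximum estimates summed over strata'': Proposition \ref{AlmostmaximumII} controls the integral only over the top strata $E_i^0$, and the complement of $\bigcup_i E_i^0$ is precisely the union of necks whose contribution is the unknown, so as stated the claim is circular. Second, and decisively, even granting that global bound, it places essentially no constraint on $u$ deep inside a neck: the density of $\omega_{\mathcal{X}}|_{X_t}^n$ relative to $\prod_1^p\sqrt{-1}\,d\log z_i\wedge d\log\bar{z}_i\wedge\prod_{p+1}^n\sqrt{-1}\,dz_k\wedge d\bar{z}_k$ is $\sim\prod_1^p|z_i|^2$, which decays exponentially in the log-distance to the ends of the neck. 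Concretely, on a neck interval $[0,L]$ with $L\sim 2|\log|t||$, the convex function $f(x)=\frac{2C}{L}\max(-x,\,x-L)$ is nonpositive, satisfies $|f|\leq C$ on unit collars at both ends, and contributes only $O(C/L)$ to the volume integral, yet $f(L/2)=-CL/2$; so every constraint your argument invokes (convexity, collar bounds, global upper bound, the volume integral) is satisfied while the central log scales carry $L^1$ averages of order $|\log|t||$. Hence no argument using only those inputs can prove the lemma. (Note also that convexity transports \emph{upper} bounds from the boundary inward; your sentence extracting an upper bound on $-\bar{u}$ from linear interpolation of boundary values has the inequality backwards --- lower bounds on a convex function cannot be interpolated inward without loss growing linearly in the distance.) The quantity that actually excludes such dips is a curvature mass, not $\int u$: since $\sqrt{-1}\ddbar u\geq -\omega_t$ and $\int_{X_t}\sqrt{-1}\ddbar u\wedge\omega_{\mathcal{X}}|_{X_t}^{n-1}=0$, the positive and negative parts of this signed measure both have total mass $O(1/|\log|t||)$; in the model case of a single neck this forces the slope of $\bar{u}_\beta$ to vary by only $O(1/|\log|t||)$ across the neck, so that $\bar{u}_\beta$ is almost affine over length $L$ and, being bounded on the collars at both ends, is $O(1)$ throughout. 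Supplying an input of this kind uniformly over all strata is exactly the step you acknowledge leaving open, and it is the real content of the lemma beyond the formal averaging and chaining structure.
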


\begin{proof}
We induct on the depth of the strata. For $p=0$ this follows from Prop. \ref{AlmostmaximumII}. So let us assume the bound is achieved for depth $<p$. For a given chart, we consider the local psh function $u_\beta$ associated to $u$ and produce the convex average function $\bar{u}_\beta$ as in Lemma \ref{pshconvexity}. Since a definite neighbourhood of the boundary of the chart lies inside less deep strata, we know that near the boundary $|\bar{u}_\beta|\leq C$ by the induction hypothesis and the convexity condition. Using convexity again in the interior of the chart we see $|\bar{u}_\beta|\leq C$ in the whole chart.

Within any log scale, by construction the local average
$
\dashint_{loc} (u_\beta- \bar{u}_\beta)=0.
$
But $u_\beta\leq C$ by $u\leq 0$, hence
\[\dashint_{loc} |u_\beta-\bar{u}_\beta| \lesssim \dashint_{loc} (u_\beta- \bar{u}_\beta )_+ \leq C.
\]
Using $|u-u_\beta|\leq C$ we conclude the local $L^1$-estimate on $u$.
\end{proof}

\subsection{Local Skoda  estimate}\label{Skodainequalitybackgroundsection}

We recall a basic version of the \emph{Skoda inequality}:

\begin{prop}\label{Skodabasicversion}
	(\cf \cite[Thm 3.1]{Zeriahi}) If $\phi$ is psh on $B_2\subset \C^n$, with $\int_{B_2} |\phi| \omega_E^n \leq 1$ with respect to the standard Euclidean metric $\omega_E$, then
	there are dimensional constants $\alpha$, $C$, such that 
	\[
 \int_{B_1} e^{-\alpha \phi} \omega_E^n \leq  C.
	\]

\end{prop}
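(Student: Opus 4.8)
The plan is to reduce the exponential integrability to a uniform bound on the singularities of $\phi$, measured by Lelong numbers, and then to invoke Skoda's integrability theorem. The first step is to show that the hypothesis $\int_{B_2}|\phi|\leq 1$ forces a uniform bound $\sup_{x\in B_{3/2}}\nu(\phi,x)\leq C_n$ on the Lelong numbers, with $C_n$ dimensional. Here I would use the spherical means $\lambda(x,r)=\dashint_{\partial B_r(x)}\phi\,d\sigma$, which for a psh (hence subharmonic) $\phi$ are nondecreasing and convex as functions of $\log r$; the Lelong number $\nu(\phi,x)$ is precisely the nonnegative slope of this convex function as $\log r\to-\infty$. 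Convexity gives the pointwise domination $\lambda(x,r)\leq \lambda(x,r_1)+\nu(\phi,x)\log(r/r_1)$ for $r<r_1$, and integrating this against $r^{2n-1}\,dr$ over a fixed ball $B_{r_1}(x)\subset B_2$ bounds $\nu(\phi,x)$ in terms of $\int_{B_2}|\phi|\leq 1$ together with $|\lambda(x,r_1)|$, the latter itself controlled by the sub-mean value property and the $L^1$ norm.

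With the uniform Lelong bound in hand, the second step applies Skoda's local integrability theorem: if $\psi$ is psh near a point $x$ with $\nu(\psi,x)<1$, then $e^{-2\psi}$ is integrable in a neighbourhood of $x$. Applying this to $\psi=\tfrac{\alpha}{2}\phi$, integrability of $e^{-\alpha\phi}$ near $x$ holds whenever $\alpha\,\nu(\phi,x)<2$. Choosing the dimensional constant $\alpha=1/C_n$ makes $\alpha\,\nu(\phi,x)\leq 1<2$ simultaneously at every $x\in B_1$, so $e^{-\alpha\phi}$ is locally integrable throughout a neighbourhood of $\overline{B_1}$, and by a finite covering its integral over $B_1$ is finite for each fixed $\phi$.

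The genuine difficulty is the third step: upgrading this qualitative finiteness to a bound $\int_{B_1}e^{-\alpha\phi}\leq C$ that is uniform over all admissible $\phi$, since the map $\phi\mapsto\int e^{-\alpha\phi}$ is not continuous in $\phi$. I would argue by contradiction and compactness: the family $\{\phi\ \text{psh on } B_2:\int_{B_2}|\phi|\leq 1\}$ is relatively compact in $L^1_{loc}$, so a putative sequence $\phi_j$ with $\int_{B_1}e^{-\alpha\phi_j}\to\infty$ subconverges in $L^1_{loc}$ to a psh limit $\phi_\infty$ with $\int_{B_2}|\phi_\infty|\leq1$; by Step 1 the limit again has subcritical Lelong numbers, so $e^{-\alpha\phi_\infty}$ is integrable. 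The Demailly--Koll\'ar semicontinuity of complex singularity exponents then yields uniform integrability of $e^{-\alpha\phi_j}$ along the sequence, contradicting the blow-up. This semicontinuity is the true technical heart of the argument; alternatively one can bypass it by establishing directly the exponential decay $|\{z\in B_1:\phi(z)<-s\}|\leq Ce^{-\alpha s}$ of the distribution function through a Lelong-number-based covering and iteration, after which the claim follows from the layer-cake formula.
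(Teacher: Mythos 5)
Your argument is correct, but note first that the paper itself offers no proof of this proposition: it is quoted as a black box from Zeriahi \cite{Zeriahi}, and your route also differs substantially from that cited proof. Your three steps all hold up. The Lelong-number bound $\nu(\phi,x)\le C_n$ on $B_{3/2}$ follows exactly as you say from monotonicity and convexity of the spherical means together with the $L^1$ bound (the needed upper bound on $\lambda(x,r_1)$ comes from monotonicity: $\lambda(x,r_1)$ times the volume of the annulus $B_{2r_1}(x)\setminus B_{r_1}(x)$ is at most $\int_{B_2}|\phi|\le 1$). Skoda's theorem then gives local integrability of $e^{-\alpha\phi}$ whenever $\alpha\,\nu(\phi,\cdot)<2$. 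The compactness step is also legitimate: the $L^1$ bound makes the family locally uniformly bounded above (sub-mean value inequality) and precludes uniform divergence to $-\infty$, so it is relatively compact in $L^1_{loc}$ with psh limits satisfying the same hypotheses; and the Demailly--Koll\'ar semicontinuity theorem does upgrade $L^1_{loc}$ convergence, plus integrability of $e^{-\alpha\phi_\infty}$ near $\overline{B}_1$ (guaranteed since $\alpha\,\nu(\phi_\infty,\cdot)\le 1<2$), to convergence of $\int_{B_1}e^{-\alpha\phi_j}$, contradicting the assumed blow-up. The trade-offs against the cited proof are worth spelling out: Zeriahi's argument is effective and uses lighter machinery --- one bounds a capacity of the sublevel sets $\{\phi<-s\}$ via extremal functions, converts capacity into volume through the Alexander--Taylor comparison theorem to obtain the exponential decay $|\{\phi<-s\}\cap B_1|\le Ce^{-\alpha s}$, and concludes by the layer-cake formula, with explicit dimensional $\alpha$ and $C$. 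Your scheme instead produces a non-effective constant $C$ (still dimensional, so the statement as formulated is fine) and leans on Demailly--Koll\'ar, whose proof via Ohsawa--Takegoshi is considerably deeper than the statement being established. Your closing suggestion --- proving the exponential decay of the distribution function directly --- is in essence the classical route, and carrying it out would give the self-contained, effective proof.
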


Applying this with Lemma \ref{LocalL1estimate},

\begin{cor}
(Local Skoda estimate) Within every log scale, there are uniform positive constants $\alpha$ and $C$, such that
\[
\dashint_{loc} e^{-\alpha u} \prod_1^p \sqrt{-1}d\log z_i \wedge d\log \bar{z}_i \wedge \prod_{p+1}^n \sqrt{-1}d z_k \wedge d \bar{z}_k \leq C.
\]
\end{cor}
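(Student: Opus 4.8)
The plan is to combine the local $L^1$-estimate of Lemma \ref{LocalL1estimate} with the basic Skoda inequality of Proposition \ref{Skodabasicversion}. The obstacle is that Proposition \ref{Skodabasicversion} is stated for an honest psh function on a Euclidean ball $B_2\subset \C^n$, whereas within a log scale our natural coordinates are the $\C^*$-coordinates $z_1,\ldots,z_p$, on which the relevant measure is the flat measure $\prod_1^p \sqrt{-1}\,d\log z_i\wedge d\log\bar z_i$ in the \emph{logarithmic} variables. The key idea is therefore to pass to the uniformizing log coordinates $w_i = \log z_i$ for $1\le i\le p$ (keeping $z_{p+1},\ldots,z_n$ unchanged) so that the log scale becomes, up to uniform bi-Lipschitz equivalence, a fixed Euclidean polydisc in the $(w,z)$ variables, and the degenerate measure $\prod_1^p \sqrt{-1}\,d\log z_i\wedge d\log\bar z_i\wedge\prod_{p+1}^n \sqrt{-1}\,dz_k\wedge d\bar z_k$ becomes comparable to the standard Euclidean volume form $\omega_E^n$ in these new coordinates.

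First I would record that the local potential $u_\beta$ associated to $u$ is psh in the original coordinates $(z_1,\ldots,z_n)$, and that $u=u_\beta$ up to a uniformly bounded additive error by the representation $|u-u_\beta|\le C$ fixed earlier. Second I would observe that, since the map $w_i=\log z_i$ is holomorphic on $(\C^*)^p$, pulling $u_\beta$ back to the $w$-coordinates produces a psh function on the corresponding region in $\C^p\times\C^{n-p}$: a single branch of $\log$ can be chosen on the log scale, and the pulled-back function remains psh because composition of a psh function with a holomorphic map is psh. Third, under this change of variables the $L^1$-average bound from Lemma \ref{LocalL1estimate} becomes precisely a normalized $L^1$-bound with respect to the Euclidean measure on the uniformized domain, after which rescaling the domain to the standard $B_2$ and absorbing the bounded difference $|u-u_\beta|$ into the constant $C$ puts us exactly in the hypotheses of Proposition \ref{Skodabasicversion}.

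Applying the basic Skoda inequality then yields $\dashint e^{-\alpha u_\beta}\le C$ over the inner ball $B_1$ with dimensional constants $\alpha,C$; converting back through $|u-u_\beta|\le C$ only changes $e^{-\alpha u}$ by a bounded factor $e^{\alpha C}$, and translating the Euclidean measure back to $\prod_1^p \sqrt{-1}\,d\log z_i\wedge d\log\bar z_i\wedge\prod_{p+1}^n \sqrt{-1}\,dz_k\wedge d\bar z_k$ gives the claimed estimate on the log scale. The main point to check carefully is the uniformity: the bi-Lipschitz constants relating a log scale (in $w$-coordinates) to a fixed Euclidean polydisc, the covering by finitely many such charts, and the comparability of measures must all be controlled independently of $t$. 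This is exactly the feature arranged throughout Section \ref{UniformSkodachapter}, where every parameter is chosen independent of the fibre coordinate $t$, so the dimensional constants $\alpha,C$ coming out of Proposition \ref{Skodabasicversion} are automatically $t$-independent, which is what the uniform statement requires.
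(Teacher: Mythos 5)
Your proposal is correct and is essentially the paper's own argument: the paper proves this corollary in one line by applying Proposition \ref{Skodabasicversion} to the local psh potentials in the uniformizing logarithmic coordinates, with the $L^1$ hypothesis supplied by Lemma \ref{LocalL1estimate}, exactly as you describe. The one technical slip is your claim that ``a single branch of $\log$ can be chosen on the log scale'': the log scale contains the full annulus $\tfrac{1}{2}|z_i(q)|\lesssim |z_i|\lesssim 2|z_i(q)|$ in each $\C^*$-coordinate, so $\log z_i$ is multivalued there; this is repaired by covering the log scale with a bounded number of simply connected sectors (equivalently, working with the periodic psh lift to the universal cover of each cylinder) and summing the resulting Euclidean Skoda estimates, which costs only a uniform multiplicative factor and so does not affect the uniformity of $\alpha$ and $C$.
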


\subsection{Uniform global Skoda estimate}

We are interested in the following class of measures, motivated by Calabi-Yau measures (\cf section \ref{CalabiYaumeasure}). Let $a_i$ be non-negative real numbers assigned to $i\in I$, with $\min a_i=0$. Let
\[
m= \max\{ |J|-1:  E_J\neq \emptyset, a_i=0 \text{ for } i\in J     \}.
\]
We say the measures $d\mu_t$ on $X_t$ satisfy a \emph{uniform upper bound of class $(a_i)$}, if on the local charts of each $E_J^0$,
\begin{equation}\label{measureupperbound}
d\mu_t \leq \frac{ C}{ |\log |t||^m  } |z_0|^{2a_0}\cdots |z_p|^{2a_p} \prod_1^p  \sqrt{-1}d\log z_i \wedge d\log \bar{z}_i \wedge \prod_{p+1}^n \sqrt{-1}d z_k \wedge d \bar{z}_k.
\end{equation}
 The normalisation factor ensures $\int_{X_t}d\mu_t \leq C$ independent of $t$, by a straightforward local calculation.

\begin{thm}\label{UniformSkodasemistable}
(Uniform Skoda estimate) Suppose the measures $d\mu_t$ on $X_t$ satisfy a uniform upper bound of class $(a_i)$. Then there are uniform positive constants $\alpha$ and $A$, such that
\[
\int_{X_t} e^{-\alpha u}  d\mu_t \leq A, \quad \forall u\in PSH(X_t,\omega_t) \text{ with } \sup_{X_t} u=0.
\]

\end{thm}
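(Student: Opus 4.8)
The plan is to assemble the global estimate from the local Skoda inequalities already established. The essential point is that the measure $d\mu_t$ is controlled by a product of $|z_i|^{2a_i}$ factors times the model volume form on each log scale, and within a single log scale the Corollary on local Skoda estimates gives $\dashint_{loc} e^{-\alpha u} \prod_1^p \sqrt{-1}d\log z_i \wedge d\log \bar{z}_i \wedge \prod_{p+1}^n \sqrt{-1}d z_k \wedge d \bar{z}_k \leq C$. First I would fix a stratum $E_J^0$ and a log scale inside it, where the coordinates $|z_i|$ vary only by a bounded factor. On such a scale the weight $|z_0|^{2a_0}\cdots|z_p|^{2a_p}$ is essentially constant, so it can be pulled outside the integral; combining with the uniform upper bound (\ref{measureupperbound}) reduces the local contribution of $\int e^{-\alpha u}\,d\mu_t$ to the local Skoda integral times this weight and the factor $|\log|t||^{-m}$.

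Next I would sum the contributions of all log scales within a fixed $E_J^0$. The log scales are parametrised by the dyadic values of $(\log|z_1|,\ldots,\log|z_p|)$, and on $X_t$ the constraint $t = z_0\cdots z_p$ means each $\log|z_i|$ ranges over an interval of length comparable to $|\log|t||$; thus the number of log scales in each coordinate direction is of order $|\log|t||$, giving a total count roughly $|\log|t||^{p}$ in the worst case. The weight $|z_0|^{2a_0}\cdots|z_p|^{2a_p}$ decays geometrically away from the locus where the relevant $a_i$ vanish, so the sum over log scales of this weight behaves like a convergent geometric series in the directions where $a_i>0$ and accumulates a factor $|\log|t||$ only in the directions where $a_i=0$. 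By the definition of $m$ as the maximal number of simultaneously vanishing $a_i$ on a nonempty stratum, the total accumulation of logarithmic factors is at most $|\log|t||^{m}$, which exactly cancels the normalisation $|\log|t||^{-m}$ in (\ref{measureupperbound}). This bookkeeping gives a uniform bound for $\int_{E_J^0} e^{-\alpha u}\,d\mu_t$ independent of $t$, and summing over the bounded number of strata $J\subset I$ yields the global estimate $\int_{X_t} e^{-\alpha u}\,d\mu_t \leq A$.

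I would take $\alpha$ to be the minimum of the finitely many dimensional constants produced by the local Skoda corollary across the different strata, and $A$ the resulting cumulative constant, both manifestly independent of $t$. The main obstacle is the summation over log scales: one must verify carefully that the geometric decay from the positive weights $a_i>0$ genuinely controls the number of log scales in those directions, while in the degenerate directions $a_i=0$ the logarithmic growth of the number of scales is precisely matched by the $|\log|t||^{-m}$ normalisation. The delicate feature is that $m$ is a \emph{global} quantity over the intersection pattern, so the cancellation must hold uniformly over all strata $E_J^0$ simultaneously; ensuring that no stratum produces more than $m$ uncompensated logarithmic factors is where the precise combinatorics of the definition of $m$ enters, and this is the step I expect to require the most care.
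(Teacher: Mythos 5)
Your overall strategy is exactly the paper's: cover $X_t$ by log scales with bounded overlap, apply the local Skoda estimate on each scale (where the weight $|z_0|^{2a_0}\cdots|z_p|^{2a_p}$ is essentially constant), and then show that the sum of these weights over all log scales is $O(|\log|t||^m)$, cancelling the normalisation in (\ref{measureupperbound}). The paper compresses this into three lines, leaving that counting implicit (it is the same ``straightforward local calculation'' invoked after (\ref{measureupperbound}) for the total mass bound), so the counting is the entire substance of the proof --- and it is precisely there that your write-up goes wrong.

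Your bookkeeping rule --- one factor of $|\log|t||$ per direction with $a_i=0$, a convergent geometric series per direction with $a_i>0$ --- is false as stated, and you also misquote the definition of $m$: the paper's $m$ is $\max(|J|-1)$, i.e.\ one \emph{less} than the maximal number of simultaneously vanishing $a_i$ on a nonempty stratum. Concrete failure: take $n=2$, a central fibre $X_0=E_0+E_1+E_2$ with a triple point, and $a_0>0$, $a_1=a_2=0$, so that $m=1$. In the chart at the triple point one has $p=2$ and both free coordinates $z_1,z_2$ have $a_i=0$, so your rule gives an accumulation $|\log|t||^2$; after multiplying by $|\log|t||^{-m}=|\log|t||^{-1}$, a factor $|\log|t||$ survives and no uniform bound follows. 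What the rule misses is the coupling imposed by $t=z_0z_1z_2$: writing $x_i=-\log|z_i|$ and $T=-\log|t|$, the weight is $e^{-2a_0x_0}=e^{-2a_0(T-x_1-x_2)}$, which is exponentially small except within $O(1)$ of the hyperplane $x_1+x_2=T$, so the sum over log scales is in fact $O(T)=O(T^m)$, not $O(T^2)$. The general repair: in each chart set $J_0=\{i\in J:\ a_i=0\}$; if $J_0\neq\emptyset$, relabel so that the coordinate eliminated by the relation $t=z_0\cdots z_p$ lies in $J_0$ (the model form $\prod_1^p\sqrt{-1}\,d\log z_i\wedge d\overline{\log z_i}\wedge\prod_{p+1}^n\sqrt{-1}\,dz_k\wedge d\bar z_k$ is unchanged by this relabeling), after which the weight genuinely decouples over the free directions and your per-direction rule gives accumulation $|\log|t||^{|J_0|-1}\leq|\log|t||^m$ --- the last inequality being exactly where the paper's actual definition of $m$ enters, since $E_{J_0}\supset E_J\neq\emptyset$; if $J_0=\emptyset$ the sum over scales is $O(1)$. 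With this correction your argument closes and coincides with the paper's proof.
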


\begin{proof}
We choose the charts so that each point on $X_t$ is covered by $\leq C$ log scales. Summing over the local Skoda estimates from all log scales, $\int_{X_t} e^{-\alpha u}d\mu_t$ is bounded by
\[
\begin{split}
 &\frac{C}{ |\log |t||^m  }  \sum_{\text{log scales}} \int_{loc} |z_0|^{2a_0}\ldots |z_p|^{2a_p}
\prod_1^p \sqrt{-1}d\log z_i \wedge d\log \bar{z}_i \wedge \prod_{p+1}^n \sqrt{-1}d z_k \wedge d \bar{z}_k 
\\
&\leq C.
\end{split}
\]
\end{proof}

\section{Application to Calabi-Yau degeneration}

We work in the setting of polarised algebraic degeneration of Calabi-Yau manifolds, as in the Introduction.

\subsection{Calabi-Yau measure}\label{CalabiYaumeasure}

The Calabi-Yau measure (\ref{CalabiYaumeasureeqn}) is studied thoroughly in \cite{Boucksom1}, but it is illustrative to recall it explicitly on a semistable snc model $\mathcal{X}$. The discussion is local on the base, and we will follow the notations of section \ref{UniformSkodachapter}, e.g. the components of the central fibre are denoted as $E_i$ for $i\in I$.

The canonical divisor $K_{ \mathcal{X} }=\sum_i a_i E_i$ is supported on the central fibre, since $K_X$ is trivialised. Multiplying $\Omega$ by a power of $t$, which does not change $d\mu_t$, we may assume $\min a_i=0$.
In the local coordinates around $E_J$ away from the deeper strata, 
\[
\Omega= f_J \prod_0^p z_i^{a_i} dz_i \wedge \prod_{p+1}^n dz_j
\]
for some nowhere vanishing local holomorphic function $f_J$. Since $t= z_0\ldots z_p$,
\[
\Omega_t=  f_J z_0^{a_0}\ldots z_p^{a_p} \prod_1^p  d\log z_i \wedge \prod_{p+1}^n dz_j,
\]
hence
\[
\sqrt{-1}^{n^2}\Omega_t \wedge \overline{\Omega}_t= |f_J|^2 |z_0|^{2a_0}\ldots |z_p|^{2a_p} \prod_1^p \sqrt{-1}d\log z_i \wedge d\overline{\log z_i} \wedge \prod_{p+1} ^n\sqrt{-1}d z_j \wedge d\overline{z}_j. 
\]
The total measure $\int_{X_t} \sqrt{-1}^{n^2}\Omega_t \wedge \overline{\Omega}_t$ is of the order $O(|\log |t| |^m  )$ where
\[
m= \max\{ |J|-1:  E_J\neq \emptyset, a_i=0 \text{ for } i\in J     \}.
\]
Thus $d\mu_t $ satisfies a uniform upper bound of class $(a_i)$ (\cf (\ref{measureupperbound})).

\subsection{Uniform Skoda estimate}

We now prove the main theorem \ref{UniformSkodamaintheorem}.

\begin{proof}
First we observe that the choice of the Fubini-Study metric $\omega_X$ is immaterial. Given any two choices, the relative K\"ahler potential between them is bounded by $O(|\log |t| |)$ for $0<|t|\ll 1$, because the pole order of a section near $t=0$ must be finite. Thus the relative K\"ahler potential between two choices of $\omega_t$ is bounded by $O(1)$ independent of $t$, which affects the Skoda constant $A$ but not its uniform nature.

We now pass to a finite base change and find a semistable reduction. The Calabi-Yau measure $d\mu_t$ on $X_t$ is independent of the parametrisation of the base, and is preserved under finite base change. Thus it is enough to prove it assuming $\omega_X$ agrees with a smooth K\"ahler metric on a semistable snc model $\mathcal{X}$; this is a special case of Theorem \ref{UniformSkodasemistable}.
\end{proof}

\subsection{Uniform  $L^\infty$-estimate}\label{Toolsfrompsh}

We recall the following result proved using Kolodziej's pluripotential theoretic methods (\cf \cite[section 2.2]{LiSYZ} for an exposition based on \cite{EGZ}\cite{EGZ2}):

\begin{thm}\label{pluripotentialthm1}
	Let $(Y, \omega)$ be a compact K\"ahler manifold, and the K\"ahler potential $\phi$ solves the complex Monge-Amp\`ere equation
	\[
\frac{	(\omega+\sqrt{-1} \partial \bar{\partial} \phi )^n}{  \int_Y \omega^n }=d\mu, \quad \sup \phi=0.
	\]
 Assume there are positive constants $\alpha, A$, such that the Skoda type estimate (\ref{Skodaassumption}) holds
 for $(Y,\omega, d\mu)$:
	\begin{equation*}
	\int_Y e^{-\alpha u}  d\mu \leq A, \quad \forall u\in PSH(Y,\omega) \text{ with } \sup_Y u=0.
	\end{equation*}
Then  $\norm{\phi}_{C^0} \leq C(n,\alpha,A)$.

\end{thm}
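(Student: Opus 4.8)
The plan is to run Kolodziej's pluripotential argument, whose engine is the Bedford--Taylor (Monge--Amp\`ere) capacity
\[
\operatorname{Cap}(K)=\sup\Bigl\{\int_K(\omega+\sqrt{-1}\partial\bar\partial v)^n:\ v\in PSH(Y,\omega),\ 0\le v\le 1\Bigr\},\qquad K\subset Y\ \text{Borel}.
\]
Since $\sup_Y\phi=0$ forces $\phi\le 0$, the entire content is a lower bound $\phi\ge -C$, equivalently the vanishing of the sublevel sets $\{\phi<-s\}$ for $s$ large. To keep the final constant depending only on $n,\alpha,A$ --- which is essential, since in the intended application $\omega=\omega_t$ degenerates --- I would work throughout with the probability measure $d\mu$ and with the \emph{normalised} capacity $\operatorname{Cap}(\cdot)/\!\int_Y\omega^n$, so that the total volume $\int_Y\omega^n$ is scaled out of every estimate.

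The first appearance of the hypothesis turns the Skoda inequality into a decay rate for sublevel sets: applying the assumed bound to $u=\phi$ and using Chebyshev's inequality,
\[
\mu(\{\phi<-s\})\ \le\ e^{-\alpha s}\int_Y e^{-\alpha\phi}\,d\mu\ \le\ A\,e^{-\alpha s},\qquad s>0.
\]
Independently, the comparison principle for the complex Monge--Amp\`ere operator yields the standard sublevel estimate: for all $s\ge 0$ and $0<t\le 1$,
\[
t^n\operatorname{Cap}(\{\phi<-s-t\})\ \le\ \int_{\{\phi<-s\}}(\omega+\sqrt{-1}\partial\bar\partial\phi)^n\ =\ \Bigl(\textstyle\int_Y\omega^n\Bigr)\,\mu(\{\phi<-s\}).
\]
Writing $f(s)=\operatorname{Cap}(\{\phi<-s\})/\!\int_Y\omega^n$, the goal is to combine these into a \emph{self-referential} inequality $t^n f(s+t)\le \widetilde F\bigl(f(s)\bigr)$ whose right-hand side is a fast-decaying function of $f(s)$ itself. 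A lemma of Kolodziej on non-increasing right-continuous functions satisfying such a feedback inequality then produces a finite threshold $s_\infty$, bounded purely in terms of $n,\alpha,A$, with $f(s_\infty)=0$; hence $\{\phi<-s_\infty\}=\emptyset$ and $\norm{\phi}_{C^0}\le s_\infty$.

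The step that supplies the self-reference --- and the one I expect to be the genuine obstacle --- is upgrading the bare measure decay into a \emph{domination of the measure by capacity}, namely an estimate $\mu(K)\le F(\operatorname{Cap}(K))$ valid for every Borel $K$, with $F$ increasing, $F(0^+)=0$, and decaying near $0$ fast enough to meet Kolodziej's integrability condition (super-polynomial decay, which the \emph{exponential} strength of Skoda readily provides, is amply sufficient). This is precisely where the full force of the hypothesis is consumed. I would apply it not to $\phi$ but to the global extremal function
\[
V_K^{*}=\Bigl(\sup\{v\in PSH(Y,\omega):\ v\le 0\ \text{on}\ K\}\Bigr)^{*},
\]
which vanishes quasi-everywhere on $K$ and whose supremum $M_K:=\sup_Y V_K^{*}$ blows up as $\operatorname{Cap}(K)\to 0$, quantitatively by the Alexander--Taylor comparison between the Monge--Amp\`ere capacity and $M_K$. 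Normalising $w=V_K^{*}-M_K$ so that $\sup_Y w=0$, and using $w\equiv -M_K$ on $K$, the Skoda bound gives
\[
e^{\alpha M_K}\,\mu(K)\ \le\ \int_Y e^{-\alpha w}\,d\mu\ \le\ A,
\]
so $\mu(K)\le A e^{-\alpha M_K}$; feeding in the Alexander--Taylor lower bound for $M_K$ in terms of $\operatorname{Cap}(K)$ yields the desired domination $F$. Applying this with $K=\{\phi<-s\}$ and substituting into the comparison estimate closes the feedback loop, after which the Kolodziej iteration terminates in finitely many steps and the proof is complete.
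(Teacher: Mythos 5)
Your proposal is correct and follows essentially the same route as the paper, which does not reprove this theorem but cites Kolodziej's pluripotential method in the form given in \cite{EGZ}, \cite{EGZ2} and expounded in \cite[section 2.2]{LiSYZ}: Chebyshev plus the Skoda bound for measure decay, the comparison principle for the capacity of sublevel sets, domination of $\mu$ by capacity via the extremal function $V_K^*$ and the Alexander--Taylor inequality, and the De Giorgi/Kolodziej iteration lemma. Your normalisation by $\int_Y \omega^n$ and the observation that the Chebyshev decay supplies the starting smallness $f(s)\to 0$ needed for the iteration lemma are exactly the points that make the final constant depend only on $n,\alpha,A$.
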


The uniform $L^\infty$-estimate for the Calabi-Yau potentials in Theorem \ref{UniformLinfty} is an immediate consequence.

\end{document}